\documentclass[12pt]{article}
\usepackage{amssymb}
\usepackage{amsmath}
\usepackage{amsthm}
\usepackage[colorlinks=true,allcolors=black]{hyperref}
\usepackage{float}
\usepackage{tikz}
\tikzset{every picture/.style={line width=0.75pt}}
\usepackage[labelfont=bf]{caption}

\newtheorem{theorem}{Theorem}

\newtheorem{definition}{Definition}
\newtheorem{remark}{Remark}

\newtheorem{corollary}{Corollary}

\NewDocumentCommand{\sff}{}{\mathrm{I\!I}}

\newcommand{\Addresses}{{
  \bigskip
  \footnotesize

KMS:
\textsc{Department of Mathematics, Johns Hopkins University, 404 Krieger Hall, 3400 North Charles Street, Baltimore, MD 21218, USA}\par\nopagebreak
  \textit{Email address:} \texttt{kmarsh34@jh.edu}

\bigskip

GN:
\textsc{Department of Mathematics, University of Rochester, 915 Hylan Building, Rochester, NY 14620, USA}\par\nopagebreak
  \textit{Email address:} \texttt{g.niu@rochester.edu}

}}
\begin{document}
\title{\large \textbf{ISOPERIMETRY BY STRETCHING}}
\author{\small KOBE MARSHALL-STEVENS \& GONGPING NIU}
\date{\vspace{-5ex}}
\maketitle

\begin{abstract}
    \noindent We construct isoperimetric regions from separating hypersurfaces in closed manifolds. This yields isoperimetric boundaries exhibiting a wide variety of topological types and singular sets.
\end{abstract} 

\tableofcontents

\section{Introduction}\label{sec: introduction}

Isoperimetric boundaries minimise area for a fixed enclosed volume, with sharp regularity theory ensuring smoothness away from a closed singular set of codimension seven. While in closed Riemannian manifolds their existence is guaranteed for every proportion of enclosed volume, relatively few explicit examples are known; we refer to the survey in \cite[Appendix H]{EM13} and references therein. Here, we show that any smooth, closed, connected, separating hypersurface can be realised as an isoperimetric boundary for some Riemannian metric on a closed manifold (see Theorem \ref{thm: topological constructions}). This follows as an application of a `stretching' procedure for such hypersurfaces that are a priori assumed to be locally uniquely area-minimising, enabling a local-to-global transfer of isoperimetry when the ambient manifold is made sufficiently long (see Theorem \ref{thm: stretching}). 

\bigskip

As our technique requires local minimisation, and crucially not smoothness, of the underlying hypersurface, it can be applied to verify a claim made in \cite{M16} and exhibit many singular isoperimetric boundaries in Riemannian spheres (see Subsection \ref{subsec: singular sets}). Moreover, with the construction of singular minimal hypersurfaces in \cite{S00}, we show that a wide variety of higher dimensional singular sets can arise in isoperimetric boundaries (see Theorem \ref{thm: singular constructions}); this includes any finite collection of smooth, closed, oriented manifolds of appropriate codimension. To the best of our knowledge, the only singular examples of isoperimetric boundaries prior to this work were those of \cite{N24}. In that work, the singular sets consisted of two isolated points and, due to a topological restriction, were modelled on symmetric Simons cones (namely cones over products of spheres of the same dimension). In contrast, our construction allows for singular sets modelled on any cylindrical tangent cone, provided that the cone factor is strictly stable and strictly minimising (see Remark \ref{rem: cones}).

\subsection{Strategy}

We now briefly outline the construction that yields Theorem \ref{thm: stretching}. Suppose that we are given a smooth, closed, connected, separating hypersurface, $\Sigma$, which is locally uniquely area-minimising in a closed Riemannian manifold, $(M,g)$. The bounded geometry of $(M,g)$ ensures that any isoperimetric region enclosing the same volume as $\Sigma$ has uniform bounds on both the diameter and number of the connected components of its boundary. By elongating portions of the tubular neighbourhood of $\Sigma$ appropriately, we obtain a family of Riemannian metrics for which the above-mentioned uniform bounds persist for any isoperimetric region with the same enclosed volume as $\Sigma$. When these manifolds are sufficiently elongated, we are able to utilise the uniform bounds on the boundary components, the unique local area-minimisation of $\Sigma$, and an elementary volume comparison argument to ensure that $\Sigma$ must be the unique isoperimetric boundary for its enclosed volume.
To carry the same construction through in the singular setting, one needs to ensure some topological control on the neighbourhood in which $\Sigma$ is uniquely minimising (see Corollary \ref{cor: singular stretching}). As a by-product of the proof of Theorem \ref{thm: stretching}, the construction above also produces isoperimetric regions from volume-constrained minimisers that are, a priori, assumed to be sufficiently close in area to unique local area-minimisers (see Corollary \ref{cor: stretching for vcm}).

\begin{figure}[H]
    \centering
\captionsetup{justification=justified,margin=1cm}
\begin{tikzpicture}[x=0.75pt,y=0.75pt,yscale=-1.25,xscale=1.25]

\draw    (239.5,149.75) .. controls (251.32,160.5) and (266.37,160.5) .. (277.62,149.75) ;
\draw    (239.88,190.25) .. controls (251.51,178.5) and (266.37,178.5) .. (278,190.25) ;
\draw    (239.88,190.25) .. controls (226.38,203.5) and (204.5,193.5) .. (205,169.5) .. controls (205.5,145.5) and (226.5,136) .. (239.5,149.75) ;
\draw    (277.62,149.75) .. controls (291.13,136.51) and (313,146.52) .. (312.49,170.52) .. controls (311.97,194.52) and (290.96,204.01) .. (277.97,190.25) ;
\draw    (321.5,169.75) -- (357.5,169.98) ;
\draw [shift={(360.5,170)}, rotate = 180.37] [fill={rgb, 255:red, 0; green, 0; blue, 0 }  ][line width=0.08]  [draw opacity=0] (8.93,-4.29) -- (0,0) -- (8.93,4.29) -- cycle    ;
\draw    (259.43,158) .. controls (261.57,166.71) and (261.86,173) .. (259.57,181.57) ;
\draw  [dash pattern={on 4.5pt off 4.5pt}]  (259.43,158) .. controls (259,164.14) and (255.86,172.14) .. (259.57,181.57) ;
\draw    (406.33,189.42) .. controls (392.83,202.67) and (370.33,192.67) .. (370.83,168.67) .. controls (371.33,144.67) and (392.33,135.17) .. (405.33,148.92) ;
\draw  [dash pattern={on 4.5pt off 4.5pt}]  (497.43,157.8) .. controls (497.27,160.1) and (496.73,162.65) .. (496.31,165.45) .. controls (495.61,170.12) and (495.25,175.47) .. (497.57,181.37) ;
\draw  [draw opacity=0] (214.99,171.98) .. controls (216.9,169.99) and (219.49,168.77) .. (222.33,168.79) .. controls (225.15,168.81) and (227.69,170.04) .. (229.53,172.03) -- (222.15,180.1) -- cycle ; \draw   (214.99,171.98) .. controls (216.9,169.99) and (219.49,168.77) .. (222.33,168.79) .. controls (225.15,168.81) and (227.69,170.04) .. (229.53,172.03) ;  
\draw  [draw opacity=0] (231.95,168.73) .. controls (230.4,171.43) and (226.84,173.43) .. (222.63,173.65) .. controls (217.67,173.91) and (213.43,171.61) .. (212.27,168.3) -- (222.37,166.11) -- cycle ; \draw   (231.95,168.73) .. controls (230.4,171.43) and (226.84,173.43) .. (222.63,173.65) .. controls (217.67,173.91) and (213.43,171.61) .. (212.27,168.3) ;  
\draw    (275.43,151.22) .. controls (277.57,164.61) and (277.86,174.27) .. (275.57,187.44) ;
\draw    (242.54,152.33) .. controls (244.68,165.31) and (244.97,174.68) .. (242.68,187.44) ;
\draw  [draw opacity=0] (286.32,172.31) .. controls (288.23,170.32) and (290.82,169.1) .. (293.66,169.12) .. controls (296.48,169.14) and (299.02,170.38) .. (300.86,172.37) -- (293.48,180.44) -- cycle ; \draw   (286.32,172.31) .. controls (288.23,170.32) and (290.82,169.1) .. (293.66,169.12) .. controls (296.48,169.14) and (299.02,170.38) .. (300.86,172.37) ;  
\draw  [draw opacity=0] (303.28,169.07) .. controls (301.73,171.76) and (298.17,173.76) .. (293.97,173.98) .. controls (289,174.25) and (284.76,171.95) .. (283.61,168.63) -- (293.71,166.44) -- cycle ; \draw   (303.28,169.07) .. controls (301.73,171.76) and (298.17,173.76) .. (293.97,173.98) .. controls (289,174.25) and (284.76,171.95) .. (283.61,168.63) ;  
\draw    (411.87,151.33) .. controls (414.02,164.31) and (414.3,173.68) .. (412.02,186.44) ;
\draw  [draw opacity=0] (383.32,172.31) .. controls (385.23,170.32) and (387.82,169.1) .. (390.66,169.12) .. controls (393.48,169.14) and (396.02,170.38) .. (397.86,172.37) -- (390.48,180.44) -- cycle ; \draw   (383.32,172.31) .. controls (385.23,170.32) and (387.82,169.1) .. (390.66,169.12) .. controls (393.48,169.14) and (396.02,170.38) .. (397.86,172.37) ;  
\draw  [draw opacity=0] (400.28,169.07) .. controls (398.73,171.76) and (395.17,173.76) .. (390.97,173.98) .. controls (386,174.25) and (381.76,171.95) .. (380.61,168.63) -- (390.71,166.44) -- cycle ; \draw   (400.28,169.07) .. controls (398.73,171.76) and (395.17,173.76) .. (390.97,173.98) .. controls (386,174.25) and (381.76,171.95) .. (380.61,168.63) ;  
\draw    (477.5,149.55) .. controls (489.32,160.3) and (504.37,160.3) .. (515.62,149.55) ;
\draw    (477.88,190.05) .. controls (489.51,178.3) and (504.37,178.3) .. (516,190.05) ;
\draw    (497.43,157.8) .. controls (499.57,166.51) and (499.86,172.8) .. (497.57,181.37) ;
\draw    (405.33,148.92) .. controls (408.78,152) and (407.41,151.33) .. (411.87,151.33) .. controls (416.33,151.33) and (414.45,145.27) .. (429,145.64) .. controls (443.55,146) and (473,144) .. (477.5,149.55) ;
\draw    (406.33,189.42) .. controls (409.78,186.33) and (408.41,187) .. (412.87,187) .. controls (417.33,187) and (415.45,193.06) .. (430,192.7) .. controls (444.55,192.33) and (473.22,194.44) .. (477.88,190.05) ;
\draw    (586.8,189.42) .. controls (600.3,202.67) and (622.8,192.67) .. (622.3,168.67) .. controls (621.8,144.67) and (600.8,135.17) .. (587.8,148.92) ;
\draw  [dash pattern={on 4.5pt off 4.5pt}]  (581.26,151.33) .. controls (579.12,164.31) and (578.83,173.68) .. (581.12,186.44) ;
\draw    (581.26,151.33) .. controls (584,160.5) and (584.83,172.4) .. (581.12,186.44) ;
\draw  [draw opacity=0] (609.82,172.31) .. controls (607.9,170.32) and (605.31,169.1) .. (602.47,169.12) .. controls (599.65,169.14) and (597.11,170.38) .. (595.27,172.37) -- (602.65,180.44) -- cycle ; \draw   (609.82,172.31) .. controls (607.9,170.32) and (605.31,169.1) .. (602.47,169.12) .. controls (599.65,169.14) and (597.11,170.38) .. (595.27,172.37) ;  
\draw  [draw opacity=0] (592.85,169.07) .. controls (594.4,171.76) and (597.96,173.76) .. (602.17,173.98) .. controls (607.13,174.25) and (611.37,171.95) .. (612.53,168.63) -- (602.43,166.44) -- cycle ; \draw   (592.85,169.07) .. controls (594.4,171.76) and (597.96,173.76) .. (602.17,173.98) .. controls (607.13,174.25) and (611.37,171.95) .. (612.53,168.63) ;  
\draw    (587.8,148.92) .. controls (584.36,152) and (585.72,151.33) .. (581.26,151.33) .. controls (576.8,151.33) and (578.68,145.27) .. (564.13,145.64) .. controls (549.59,146) and (520.13,144) .. (515.63,149.55) ;
\draw    (586.8,189.42) .. controls (583.36,186.33) and (584.72,187) .. (580.26,187) .. controls (575.8,187) and (577.68,193.06) .. (563.13,192.7) .. controls (548.59,192.33) and (520.66,194.44) .. (516,190.05) ;
\draw  [dash pattern={on 4.5pt off 4.5pt}]  (242.54,152.33) .. controls (240.4,165.31) and (240.11,174.68) .. (242.4,187.44) ;
\draw  [dash pattern={on 4.5pt off 4.5pt}]  (411.87,151.33) .. controls (409.73,164.31) and (409.44,173.68) .. (411.73,186.44) ;
\draw  [dash pattern={on 4.5pt off 4.5pt}]  (275.71,152.33) .. controls (273.57,165.31) and (273.29,174.68) .. (275.57,187.44) ;

\end{tikzpicture}
    \caption{In each of the two graphics above, the three circles depict $\Sigma$ between the boundaries of the tubular neighbourhood in which it is uniquely area-minimising. Along the stretching procedure, the `caps' outside of this tubular neighbourhood are unchanged.}
    \label{fig: stretching}
\end{figure}
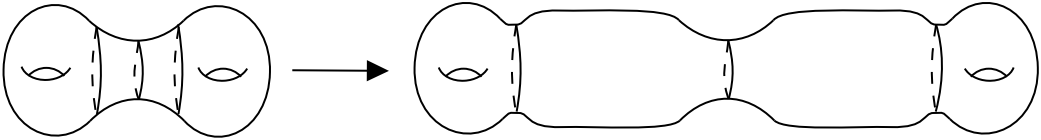

We now proceed as follows: Section \ref{sec: stretching} records the stretching of Theorem \ref{thm: stretching} and the above mentioned Corollaries \ref{cor: singular stretching} and \ref{cor: stretching for vcm}. Section \ref{sec: applications} contains the applications of Theorem \ref{thm: stretching}, with Subsection \ref{subsec: topology} devoted to the smooth setting and Subsection \ref{subsec: singular sets} to the construction of singular isoperimetric boundaries.

\subsection{Notation}\label{subsec: notation}

We now collect some notation and definitions that will be used throughout this work:

\begin{itemize}
    \item Throughout, we let $M$ denote a smooth, closed (i.e.~compact with empty boundary) manifold of dimension $n + 1 \geq 2$. Without loss of generality we will assume that $M$ is connected. We say that a set, $S \subset M$, is \textbf{separating} if there are disjoint, connected, open sets, $E,F \subset M$, with $S = \partial E = \partial F$ and $M = E \cup S \cup F$. By a \textbf{hypersurface}, $\Sigma \subset M$, we will mean a closed subset $\Sigma = \mathrm{Reg}(\Sigma) \cup \mathrm{Sing}(\Sigma)$, where the \textbf{regular set}, $\mathrm{Reg}(\Sigma)$, is a smoothly embedded manifold of dimension $n$, and the \textbf{singular set}, $\mathrm{Sing}(\Sigma)$, is a (possibly empty) closed set of Hausdorff dimension at most $n-7$. Such hypersurfaces will arise as boundaries of Caccioppoli sets (defined below), which we may regard as integral $n$-currents with $\mathbb{Z}$ coefficients when the regular part is oriented, or $n$-chains with $\mathbb{Z}_2$ coefficients otherwise.

    \item Given a smooth Riemannian metric, $g$, on $M$ we denote $\mathrm{dist}_g$ for the distance function on $M$, $B^g_r(p)$ the open geodesic ball in $M$ of radius $r > 0$ centred at $p \in M$, $\mathrm{Vol}_g(E)$ for the volume of a measurable set $E \subset M$, and $\int_E u \, dV_g$ for the integral of a measurable function $u$, all with respect to the metric $g$. When working in $\mathbb{R}^{n+1}$ we will write $g_\mathrm{eucl}$ for the Euclidean metric. We may omit metric dependence from our notation when working in Euclidean space or when it is clear from context.

    \item A measurable set $E \subset M$ is a \textbf{Caccioppoli set} if the indicator function of $E$ is of bounded variation, or equivalently if it has finite \textbf{perimeter} with respect to the metric $g$ so that
    \begin{equation*}
    \mathrm{Per}_g(E) = \sup \left\{\int_E \mathrm{div}_g X \, dV_g \: \bigg| \: X \in \Gamma(TM), g(X,X) \leq 1 \right\} < \infty,
    \end{equation*}
    where $\mathrm{div}_g$ is the divergence with respect to the metric $g$ and $\Gamma(TM)$ is the set of vector fields on $M$; we write $\mathrm{Per}_g(E;U)$ for the perimeter relative to a measurable set $U \subset M$ by restricting the integral above to $U$. We write $\mathcal{C}(S)$ for the set of Caccioppoli sets in a set $S$, and note that $\mathcal{C}(S)$ is independent of the choice of metric on $S$. For a Caccioppoli set, $U \in \mathcal{C}(M)$, we denote by $[U]$ the associated integral $(n+1)$-current with coefficients in $\mathbb Z_2$; then $\partial U =\mathrm{Spt}(\partial[U])$.
    
    \item We say that $\Omega \in \mathcal{C}(M)$ is an \textbf{isoperimetric region} of enclosed volume $t \in \mathbb{R}$ if
    \begin{equation*}
        \mathrm{Per}_g(\Omega ) = \inf_{E \in \mathcal{C}(M)}\{\mathrm{Per}_g(E) \: | \: \mathrm{Vol}_g(E) = t\}.
    \end{equation*}
    The existence of isoperimetric regions of a given enclosed volume in $M$ is then guaranteed by the direct method of the calculus of variations (e.g.~see \cite[Section 13.2]{M16}). Moreover, we say that $\Omega \in \mathcal{C}(M)$ is a \textbf{volume-constrained minimiser} in an open set $U \subset M$ if 
    \begin{align*}
        \mathrm{Per}_g(\Omega; U ) = \inf_{E\in \mathcal{C}(M)} \{\mathrm{Per}_g(E; U) \, | \, \Omega \Delta E \subset \subset U \text{ and  } \mathrm{Vol}_g(\Omega\cap U) = \mathrm{Vol}_g(E\cap U) \}.
    \end{align*}
    
    \item Given an isoperimetric region $\Omega$ as above, we denote by $\Sigma =\partial \Omega$ the closed, two-sided, embedded hypersurface of constant mean curvature (defined on its regular part); by \cite{GMT83} and \cite{M03}, we ensure that $\mathrm{Sing}(\Sigma)$ is of Hausdorff dimension at most $n-7$. We write $|\Sigma|_g$ for the area of $\Sigma$ and denote by $\nu_g$ the outward pointing unit normal to $\Sigma$ with respect to the metric $g$.

     \item Suppose that $\mathbf{C} \subset \mathbb{R}^{n+1}$ is a \textbf{regular} minimal hypercone, so that $\mathrm{Sing}(\mathbf{C}) \subset \{0\}$. We then denote the smooth, closed, manifold $\Sigma = \mathbf{C} \cap \mathbb{S}^n \subset \mathbb{R}^{n+1}$ as its \textbf{link}. By parameterising $\mathbf{C}$ in radial coordinates, $(r,\omega) \in (0,\infty) \times \Sigma$, we decompose (as in \cite{S68}) the Jacobi operator, $L_{\mathbf{C}}$, of $\mathbf{C}$ as
    $$L_{\mathbf{C}} = \partial^2_r + \left(\frac{n-1}{r}\right)\partial_r + \frac{1}{r^2}(\Delta_\Sigma + |\mathrm{I\!I}_\Sigma|^2),$$
    where $\mathrm{I\!I}_\Sigma$ is the second fundamental form of $\Sigma$ in $\mathbb{S}^n$. We let $\mu_1 < \mu_2 \leq \mu_3 \leq \ldots \nearrow + \infty$ be the eigenvalues of $- (\Delta_\Sigma + \vert \mathrm{I\!I}_\Sigma \vert^2)$, and $\varphi_1, \varphi_2, \ldots$ be the corresponding $L^2(\Sigma)$-orthonormal eigenfunctions where $\varphi_1 > 0$. By \cite{S68}$, \mathbf{C}$ is \textbf{stable} if and only if $\mu_1\geq -\frac{(n-2)^2}{4}$ and, we say (as in \cite{CHS84}) that $\mathbf{C}$ is \textbf{strictly stable} if $\mu_1 > -\frac{(n-2)^2}{4}$; the relationships above regarding inequalities for $\mu_1$ and the stability of $\mathbf{C}$ are made explicit in \cite[Section 4]{CHS84}. Moreover, we say (as in \cite{HS85}) that a minimal hypercone, $\mathbf{C}$, as above is \textbf{strictly minimising} if there is a $\Theta > 0$ such that for each $\varepsilon > 0$ we have 
    $$|\mathbf{C} \cap B_1(0)| \leq |S| - \Theta\varepsilon^n$$
    whenever $S$ is an integer rectifiable current with $\mathrm{Spt}(S) \subset \mathbb{R}^{n+1} \setminus B_\varepsilon(0)$ and $\partial S = \partial (\mathbf{C} \cap B_1(0))$.
    
\end{itemize}

\section{Stretching}\label{sec: stretching}

Our main result is the following:

\begin{theorem}\label{thm: stretching}
    If $\Sigma \subset M$ is a smooth, closed, connected, separating hypersurface which is uniquely homologically area-minimising in a tubular neighbourhood of $\Sigma$ in $(M,g)$, then there is a Riemannian metric, $h$, on $M$ such that $\Sigma$ bounds an isoperimetric region in $(M,h)$.
\end{theorem}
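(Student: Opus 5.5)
We follow the strategy outlined in the introduction. Fix a tubular neighbourhood $T \cong \Sigma \times [-1,1]$ in which $\Sigma = \Sigma \times \{0\}$ is uniquely homologically area-minimising, and write $M \setminus T = C_- \cup C_+$ for the two `caps'; these are disjoint because $\Sigma$ is separating. For $L \geq 1$ we define $h_L$ by inserting long collars on each side of $\Sigma$. Concretely, on $T$ we replace the normal coordinate $s \in [-1,1]$ by a reparametrisation $s = \phi_L(\sigma)$ with $\sigma \in [-L,L]$. We choose $\phi_L$ equal to the identity on $[-1/2,1/2]$ (so $h_L = g$ near $\Sigma$). On the remaining pieces we stretch, and then interpolate smoothly to a product metric $g|_{\Sigma \times \{\pm 1\}} + d\sigma^2$ on long cylinders $\Sigma \times [\pm a, \pm(L-a)]$. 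Near $\partial T$ we glue back to $g$ so that the caps are unchanged. Then $\mathrm{Vol}_{h_L}(E_L) = V_0 + c_- L$ and $\mathrm{Vol}_{h_L}(F_L) = V_1 + c_+ L$ for the two sides $E_L, F_L$ of $\Sigma$. Here $c_\pm$ are the areas of $\Sigma\times\{\pm1\}$ (suitably adjusted), and $|\Sigma|_{h_L} = |\Sigma|_g =: A$. Fix $t_L = \mathrm{Vol}_{h_L}(E_L)$ and let $\Omega_L$ be an isoperimetric region of volume $t_L$ in $(M,h_L)$. The goal is to show $\partial\Omega_L = \Sigma$ for $L$ large.

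\emph{Step 1: uniform bounds.} By comparison with $E_L$ we have $\mathrm{Per}_{h_L}(\Omega_L) \leq A$. The metrics $h_L$ have uniformly bounded geometry (curvature and injectivity radius bounded independently of $L$, since the new pieces are products or fixed interpolations). The monotonicity formula for the CMC boundary then applies once we also bound the mean curvature $H_L$ uniformly. For this we use a standard argument: deform $\Omega_L$ in a fixed region to show the isoperimetric profile is uniformly Lipschitz near $t_L$. Monotonicity then gives a lower area bound $\theta>0$ for each component of $\partial\Omega_L$ in unit balls. Hence the number of components is at most $A/\theta$, and each component has intrinsic diameter, hence extrinsic diameter, bounded by some $D$ independent of $L$.

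\emph{Step 2: localisation and comparison.} Take $L \gg D$. Each component of $\partial\Omega_L$ lies in a ball of radius $D$. So it either lies entirely inside a product cylinder $\Sigma\times[a,L-a]$ region, or within distance $D$ of a cap, or within distance $D$ of the unstretched core $\Sigma\times[-1/2,1/2]$. A component contained in a product cylinder $\Sigma \times I$ must separate. Such a component is either null-homologous there, in which case it bounds a region of volume at most $C(D)$, or it is homologous to a slice, in which case its area is at least $\min_\pm c_\pm$ by the calibration given by $d\sigma$ on products. Now we do the volume bookkeeping. Since $\mathrm{Vol}(\Omega_L) = t_L$ grows linearly in $L$ on each side, while components near caps or null-homologous pieces enclose only $O(1)$ volume, $\partial\Omega_L$ must contain a component homologous to $\Sigma$ in $T$. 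Such a component lies in some $\Sigma\times[\sigma_0-D,\sigma_0+D]$. If it lies in the core region, unique homological minimisation in $T$ (pulled back via the identity near $\Sigma$, with the stretched parts only increasing the relevant area in a controlled way) gives area $\geq A$. Equality holds iff it equals $\Sigma$. If it lies in a product region, its area is at least $c_\pm$. Here we must arrange in the construction that $c_\pm > A$ strictly, or alternatively translate via the product structure and compare. This is the point that needs care. Since $\mathrm{Per}(\Omega_L) \leq A$, all other components have zero area, so $\partial\Omega_L$ is a single component homologous to $\Sigma$ with area $\leq A$. The volume constraint then pins its position, so that it must be $\Sigma$ itself.

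\emph{Main obstacle.} The delicate step is the comparison in Step 2. The unique minimisation hypothesis lives in $(T,g)$, and a competitor homologous to $\Sigma$ could slide into the stretched collars, where a priori its area could be close to $A$ while enclosing the right volume. I would first handle this by choosing the stretch so that the collar metric is $g$ pushed out in the normal direction with cross-sections of area strictly exceeding $A$ away from $\Sigma$. This uses that strict unique minimality plus compactness gives $|\Sigma\times\{s\}|$-type lower bounds $A+\delta(s)$ for all homologous hypersurfaces in $\Sigma \times [s,1]$, $s>0$. Any homologous competitor not equal to $\Sigma$ then has area $>A$, by a compactness argument for minimisers within the fixed bounded region of width $2D$.
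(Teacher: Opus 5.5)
There is a genuine gap, and it sits exactly at the step you flagged as the main obstacle. Your overall architecture matches the paper's: uniform mean-curvature and monotonicity bounds, localisation of components at scale $D$, volume bookkeeping that forces a component homologous to $\Sigma$, then unique minimisation. The problem is that your construction does not guarantee that $\Sigma$ is still uniquely homologically area-minimising in $(T,h_L)$, and the final comparison rests entirely on that. In your collars the tangential metric is interpolated from $g|_{\Sigma\times\{\pm1/2\}}$ to the product cross-section $g|_{\Sigma\times\{\pm1\}}$. Viewed on $M$ via $s=\phi_L(\sigma)$, this cross-section need not dominate the original slice metric $g_s$ at the same point. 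So for a competitor $S$ homologous to $\Sigma$ in $T$ that passes through the modified collar, you have no inequality relating $|S|_{h_L}$ to $|S|_g$. For instance, one can take $g_s$ very large on an intermediate layer $\Sigma\times(0.6,0.9)$ and $g_1$ small on a large region $W\subset\Sigma$. Then $g$-competitors rising over $W$ must pay to cross that layer, whereas your interpolation from $g_{1/2}$ straight to $g_1$ deletes the layer.

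Your proposed remedy does not close this gap. Cross-sections of area $>A$ (automatic for interior slices anyway) control only slices, and, via the $d\sigma$ calibration, competitors lying entirely in the exact product part. The bounds $A+\delta(s)$ are $g$-area bounds for competitors contained in $\Sigma\times[s,1]$; they say nothing about $h_L$-area, nor about competitors that meet the core and exit into the collar. The compactness argument in the region of width $2D$ presupposes that $\Sigma$ is the unique $h_L$-minimiser there, which is the point at issue. Likewise, ``the volume constraint pins its position'' cannot substitute for uniqueness.

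The missing idea is to build the new metric so that $h\geq g$ pointwise on $T$ (as quadratic forms), with $h=g$ near $\Sigma$. The paper does this in two stages. It first interpolates convexly, $(1-\eta)h_t+\eta h_\Gamma+dt^2$, to a product metric whose cross-section satisfies $h_\Gamma\geq\max_t h_t$. Only then does it multiply the $dt^2$ coefficient by $\rho_R\geq 1$, placing the transition of $\rho_R$ inside the region where the metric is already a product; this also keeps the geometry uniformly bounded. Then every $S$ homologous to $\Sigma$ in $T$ satisfies $|S|_{g_R}\geq|S|_g\geq|\Sigma|_g=|\Sigma|_{g_R}$, with equality only if $S=\Sigma$. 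Unique minimisation therefore transfers wholesale, and your case analysis (core versus product region, calibration, $c_\pm>A$) becomes unnecessary.

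With that in hand, your loose volume bookkeeping should be made precise as the paper does. Discard the null-homologous components contained in $T$ by passing to $F=E\triangle\bigl(\triangle_{i\in\mathcal I}U_i\bigr)$, so that $\partial F$ lies in the bounded-volume set $K_R$ of points within $D$ of the caps. Connectedness of $M\setminus K_R$ then bounds $\min\{\mathrm{Vol}(E),\mathrm{Vol}(M\setminus E)\}$ uniformly, contradicting the linear growth of both volumes.
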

\begin{proof}
    We let $\Omega$ denote one of the two open sets bounded by $\Sigma$, and $V \subset M$ denote a tubular neighbourhood of $\Sigma$ in which it is uniquely homologically area-minimising. By taking a smaller neighbourhood of $\Sigma$ if necessary we may suppose that $V$ has smooth boundary, $\partial V = \Gamma^+ \cup \Gamma^-$, where each of the boundary components, $\Gamma^\pm$, are smooth, connected, and arise as the images of $\Sigma$ under the exponential map (i.e.~level sets of the signed distance function to $\Sigma$), chosen so that $\Gamma^- \subset \Omega$ and $\Gamma^+ \subset M \setminus \Omega$. In particular, we then ensure that $\Sigma$ generates the codimension-one homology of $V$ (with $\mathbb{Z}$ coefficients if $\Sigma$ is oriented, and $\mathbb{Z}_2$ coefficients otherwise).
    
    \bigskip
    
    We now alter the metric $g$ by stretching in the normal direction to $\Sigma$. For sufficiently small $\varepsilon > 0$ and for each $\Gamma = \Gamma^\pm$, we fix a collar, $\Phi: \Gamma \times[0,\varepsilon] \to V$, given by the exponential map on $\Gamma$ (with the unit normal on $\Gamma$ pointing into $V$) so that $\Phi(\Gamma \times [0,\varepsilon]) \cap \Sigma = \emptyset$. In this collar, the metric $g$ then has the form
    $$
        \Phi^*g = h_t + dt^2,
    $$
    where $\{h_t\}_{t \in [0,\varepsilon]}$ denotes a smoothly varying family of metrics on $\Gamma$.
      Now, we choose a smooth cutoff function, $\eta:[0,\varepsilon]\to[0,1]$,  which satisfies
    \[
    \begin{cases}
    \eta(t)=0 \ \text{for } t\in\left[0,\frac{\varepsilon}{10}\right]\cup\left[\frac{9\varepsilon}{10},\varepsilon\right],\\
    \eta(t)=1 \ \text{for } t\in\left[\frac{2\varepsilon}{10},\frac{8\varepsilon}{10}\right].
    \end{cases}
    \]
    Next, we choose a smooth metric, $h_\Gamma$, on $\Gamma$ such that $h_\Gamma \ge \max_{t\in[0,\varepsilon]} h_t$,
    and then define on $\Gamma \times [0,\varepsilon]$ the metric by setting
    $$
    \Phi^* \tilde g
    =
    \Bigl((1-\eta(t))\,h_t(x)+\eta(t)\,h_\Gamma(x)\Bigr)
    + dt^2.
    $$
    Note then that $\Phi^*\tilde g=\Phi^*g$ for $t\in[0,\varepsilon/10]\cup[9\varepsilon/10,\varepsilon]$, so that the resulting metric, $\tilde{g}$, is unchanged in a neighbourhood of $\Gamma^\pm$ and $\Sigma$. Moreover, we ensure that  $\tilde{g}$ is cylindrical on $\Phi(\Gamma\times     [\varepsilon/3,2\varepsilon/3])$ and such that $\tilde{g} \ge g$.
    Considering the two surfaces, $\Phi(\Gamma\times \{\frac{\varepsilon}{3}\})$ and $\Phi(\Gamma\times \{\frac{2\varepsilon}{3}\})$, we now alter $\tilde{g}$ and for each $R > 1$ construct metrics, $g_R$, so that $\mathrm{dist}_{g_R}(\Phi(\Gamma\times \{\frac{\varepsilon}{3}\}), (\Phi(\Gamma\times \{\frac{2\varepsilon}{3}\}))=R$.
    We choose another smooth cutoff function, $\zeta:[0,\varepsilon]\to[0,1]$,  which satisfies
    \[
    \begin{cases}
    \zeta(t)=0 \ \text{for } t\in\left[0,\frac{2\varepsilon}{10}\right]\cup\left[\frac{8\varepsilon}{10},\varepsilon\right],\\
    \zeta(t)=1 \ \text{for } t\in\left[\frac{3\varepsilon}{10},\frac{7\varepsilon}{10}\right].
    \end{cases}
    \]
    Denoting $\ell = \mathrm{dist}_{\tilde g}\left(\Phi\left(\Gamma\times\{\tfrac{\varepsilon}{3}\}\right),\ 
    \Phi\left(\Gamma\times\{\tfrac{2\varepsilon}{3}\}\right)\right)  = \frac{\varepsilon}{3}$, for each $R > 1$ we consider the stretching function
    \[
    \rho_R(t) = 1+\left((R/\ell)^2-1\right)\zeta(t),
    \]
    so that $\rho_R = 1$  on $[0,2\varepsilon/10]\cup
    [8\varepsilon/10,\varepsilon]$, and
    $\rho_R = (R/\ell)^2$ on $[\varepsilon/3,\,2\varepsilon/3]$. We then define the metrics $g_R$ for each $R > 1$ by setting:
    \[
    \begin{cases}
        g_R = \tilde{g} \qquad  &\text{ on } M \setminus \Phi(\Gamma\times[0,\varepsilon]),\\
        \Phi^*g_R = \tilde{g}_t + \rho_Rdt^2\qquad &\text{ on } \Gamma\times[0,\varepsilon].
    \end{cases}
    \]
    Thus, under this construction we ensure that $\mathrm{dist}_{g_R}(\Phi(\Gamma\times \{\frac{\varepsilon}{3}\}), (\Phi(\Gamma\times \{\frac{2\varepsilon}{3}\}))=R$ and $g_R$ remains cylindrical on $\Phi(\Gamma\times     [\varepsilon/3,2\varepsilon/3])$ for each $R > 1$ (c.f.~the right-hand graphic in Figure \ref{fig: stretching} for a depiction of $(M,g_R)$). By the above construction, we ensure the following properties:
    \begin{enumerate}
        \item $\Sigma \subset V$ is the unique homological minimiser in $(V,g_R)$  with $\mathrm{Per}_{g_R} (\Omega) = \mathrm{Per}_{g}(\Omega)$ for all $R>1$.
        \item $\mathrm{Vol}_{g_R}(\Omega), \mathrm{Vol}_{g_R}(M \setminus \Omega), \mathrm{Vol}_{g_R}(V)  \rightarrow \infty$ as $R \rightarrow \infty$.
        
        \item $\mathrm{Vol}_{g_R}(M \setminus V) = C$ for a constant $C > 0$ independent of $R > 1$.
       \end{enumerate}

    We now show that, when $R > 1$ is made sufficiently large, $\Omega$ is an isoperimetric region for its enclosed volume in $(M,g_R)$. First, noting that for each $R > 1$ the metrics $g_R$ have the same uniform bounds on sectional curvature and their injectivity radius, by \cite[Lemma 4.3]{N24} (see also \cite[Lemma 2]{MNP25}) we ensure a uniform bound on the (constant) mean curvature of the smooth portions of any isoperimetric boundary enclosing the same volume as $\Omega$ with respect to the metric $g_R$. Therefore, as in the proof of \cite[Lemma 4.4]{N24} (see also \cite[Lemma 2]{C91} and the remark after its proof), the monotonicity formula guarantees that there is some uniform $\delta > 0$, independent of $R > 1$, so that each connected boundary component has area at least $\delta$. Using this, we ensure uniform upper bounds, $D,L > 0$, independent of $R > 1$ on the extrinsic diameter of each boundary component and the number of connected boundary components respectively (by direct comparison with the area of $\Sigma$). We now argue that, for $R > 1$ sufficiently large, there is only one boundary component for each such isoperimetric region, and that it must coincide with $\Sigma$.

    \bigskip
    
    Suppose now that $E$ is an isoperimetric region with the same volume as $\Omega$ in $(M,g_R)$ for some $R > 1$; from the above discussion that there are at most $L$ connected boundary components and each of these components has extrinsic diameter at most $D$. Note that if some connected boundary component, $S$, of $E$ both lies inside of $V$ and is homologous to $\Sigma$ in $V$, then since $\Sigma$ is uniquely homologically area-minimising in $V$ by property 1 above, we ensure that $|S|_{g_R} = |\Sigma|_{g_R}$ and thus $\partial E = \Sigma$ 
    (else we would have $\mathrm{Per}_{g_R}(E) > \mathrm{Per}_{g_R}(\Omega) + \delta$ by the uniform lower area bound on each boundary component). We thus can argue that some boundary component of $E$ both lies inside of $V$ and is homologous to $\Sigma$ in $V$ in order to conclude. 
    
    \bigskip

    Assuming then that $E$ has no boundary components that are both contained in $V$ and homologous to $\Sigma$ in $V$, we will derive a contradiction for $R > 1$ sufficiently large. Let us write the connected components of $\partial E$ as a disjoint union
    $$
    \partial E = S_1 \cup \dots \cup S_l
    $$
    for some integer $1 \leq l \leq L$. Defining the set of points at most distance $D$ from $M \setminus V$ as
    $$
    K_R = \{x \in M \, | \, \mathrm{dist}_{g_R}(x,M \setminus V) \leq D\},
    $$
    we see that if some $S_i \cap (M \setminus V) \neq \emptyset$ then the uniform diameter bound above implies that $S_i \subset K_R$. We note also that, by the uniform bounds on the geometry, we ensure that $\mathrm{Vol}_{g_R}(K_R) \leq C_0$ for a constant $C_0 > 0$ independent of $R > 1$.

    \bigskip
    Denoting $\mathcal{I} = \{i \in \{1,\dots,l\} \, | \, S_i \subset V \}$ for the indices of the boundary components of $E$ that lie entirely in $V$, since $\Sigma$ generates the codimension-one homology of $V$ we see that by our assumption, $S_i$ is null-homologous in $V$ for each $i \in \mathcal{I}$. Hence we can find some Caccioppoli set $U_i \subset \subset V$ such that $\partial U_i = S_i$ for each $i \in \mathcal{I}$; here we choose the set $U_i$ which does not intersect $\partial V$, which we can ensure since $S_i$ is null-homologous in $V$. By the uniform diameter bound above and the cylindrical structure of $V$, for each $i \in \mathcal{I}$ we ensure that $\mathrm{Vol}_{g_R}(U_i) \leq C_1$ for a constant $C_1 > 0$ independent of $R > 1$.

    \bigskip

    Setting $U = \Delta_{i \in \mathcal{I}} U_i$ to be the symmetric difference of the $U_i$ for $i \in \mathcal{I}$ and $F = E \Delta U$ (so that in particular $E = F \Delta U$), by considering the Caccioppoli sets as integral $(n+1)$-currents with coefficients in $\mathbb{Z}_2$, we have
    $$
    [F] = [E] + \sum_{i \in \mathcal{I}} [U_i],
    $$
    and thus by taking boundaries we see that
    $$
    \partial [F] = \partial [E] + \sum_{i \in \mathcal{I}} \partial [U_i] = \sum_{i = 1}^l [S_i] + \sum_{i \in \mathcal{I}} [S_i] = \sum_{i \notin \mathcal{I}} [S_i].
    $$
    Now, for $i \notin \mathcal{I}$ we have $S_i \cap (M \setminus V) \neq \emptyset$ and by the uniform diameter bound above we see that $S_i \subset K_R$, thus
    $$
    \partial F \subset \bigcup_{i \notin I} S_i \subset K_R;
    $$
    in other words, $F$ has no boundary components entirely in $V$. 

    \bigskip

    Now, since for $R > 1$ sufficiently large we have that $M \setminus K_R$ is connected, the constancy theorem (or the fact that then $F$ is relatively closed and open in $M \setminus K_R$) imply that either 
    $$
    F\subset K_R
    \qquad\text{or}\qquad
    M\setminus F\subset K_R,
    $$
    and since $E=F\triangle U$ as noted above, it follows that either
    $$
    E\subset K_R \cup\bigcup_{i\in\mathcal I}U_i
    \quad  \text{ or } \quad
    M\setminus E\subset K_R \cup\bigcup_{i\in\mathcal I}U_i.
    $$
    With the uniform volume control on both $K_R$ and the $U_i$ above, and the fact that $l \leq L$ independently of $R > 1$ we see that
    $$
    \min\left\{ \mathrm{Vol}_{g_R}(E), \mathrm{Vol}_{g_R}(M\setminus E) \right\} \leq \mathrm{Vol}_{g_R}(K_R) + \sum_{i\in\mathcal I}\mathrm{Vol}_{g_R}(U_i) \leq C_0+LC_1,
    $$ 
    independently of $R > 1$. However, this uniform volume bound contradicts property 2 of the construction above for sufficiently large $R > 1$ since both $\mathrm{Vol}_{g_R}(E) = \mathrm{Vol}_{g_R}(\Omega)$ and $\mathrm{Vol}_{g_R}(M \setminus E) = \mathrm{Vol}_{g_R}(M \setminus \Omega)$. Thus, we see that $E$ has some boundary component in $V$ which is also homologous to $\Sigma$ in $V$. As mentioned, property 1 above then ensures that $\partial E = \Sigma$, and thus $\Sigma$ bounds an isoperimetric region of enclosed volume $\mathrm{Vol}_{g_R}(\Omega)$ as desired.
\end{proof}
 
The proof of Theorem \ref{thm: stretching} relied on smoothness of the separating hypersurface only to ensure some topological control on the neighbourhood in which it is uniquely homologically minimising. Provided we have this control, we can carry out the same construction in the singular setting: 

\begin{corollary}\label{cor: singular stretching}
    If $\Sigma \subset M$ is a closed, connected, separating hypersurface which is uniquely homologically area-minimising in a neighbourhood, $V$, of $\Sigma$ in $(M,g)$, with $V$ satisfying the following topological assumptions: 

    \begin{itemize}
        \item $V$ is a cobordism between smooth connected $\Gamma^\pm \subset \Omega^\pm$ with $M \setminus \Sigma = \Omega^+ \cup \Omega^-$, where $\Omega^\pm$ are disjoint, $\Sigma$ separates $V$, and $\partial V$ separates $M$ into three connected components.

        \item $\Sigma$ generates the codimension-one homology of $V$ (with $\mathbb{Z}$ coefficients if $\mathrm{Reg}(\Sigma)$ is oriented, and $\mathbb{Z}_2$ coefficients otherwise).
    \end{itemize}
    Then, there is a Riemannian metric, $h$, on $M$ such that $\Sigma$ bounds an isoperimetric region in $(M,h)$.
\end{corollary}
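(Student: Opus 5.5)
The plan is to rerun the proof of Theorem \ref{thm: stretching} essentially verbatim, replacing each use of smoothness of $\Sigma$ with the topological hypotheses on $V$. The only places the smooth case used the tubular-neighbourhood structure were these four:
\begin{itemize}
\item[(a)] to obtain smooth connected boundary components $\Gamma^\pm$ carrying collars on which the metric can be stretched, away from $\Sigma$;
\item[(b)] to ensure that $\Sigma$ generates the codimension-one homology of $V$;
\item[(c)] to ensure that $\Omega$, $M\setminus \Omega$ and $V$ all acquire unbounded volume under stretching;
\item[(d)] to ensure that $M\setminus K_R$ is connected for $R$ large.
\end{itemize}
I will check each of these in turn from the hypotheses of the corollary. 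Let $\Omega=\Omega^-$. Since $\Gamma^\pm$ are smooth, closed hypersurfaces disjoint from the closed set $\Sigma$, each has a collar $\Phi:\Gamma\times[0,\varepsilon]\to V$ given by the normal exponential map pointing into $V$. Taking $\varepsilon$ small, this collar avoids $\Sigma$, including $\mathrm{Sing}(\Sigma)$, since $\Sigma$ is compact and disjoint from $\Gamma^\pm$.

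Next I construct $\tilde g$ and $g_R$ on these collars exactly as before. The metric is unchanged near $\Sigma$ and near $\partial V$, and $g_R\ge g$ on $V$. Hence competitors in $V$ have no smaller $g_R$-area, while $\Sigma$ keeps its area. So property 1 holds: $\Sigma$ remains the unique homological minimiser in $(V,g_R)$, which is a statement about integral currents (or $\mathbb{Z}_2$-chains) and does not need regularity of $\Sigma$. Property 3 is immediate. For property 2, I use the hypotheses that $\Sigma$ separates $V$ and $\Gamma^\pm\subset\Omega^\pm$: then the $\Gamma^-$ collar lies in $\Omega^-$ and the $\Gamma^+$ collar lies in $\Omega^+$, so both volumes grow roughly linearly in $R$.

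For the uniform estimates, the curvature and injectivity-radius bounds on $g_R$ are independent of $R$, so the mean curvature bound from \cite[Lemma 4.3]{N24} and the monotonicity lower bound $\delta$ on the area of each component carry over. Monotonicity only needs to be applied at regular points, and those exist on each component. This yields the bounds $D$ and $L$. In the main argument, a component $S_i\subset V$ is either homologous to $\Sigma$ in $V$ or null-homologous; this uses the second bullet, which is exactly property (b). In the first case uniqueness forces $\partial E=\Sigma$. In the second I pick $U_i\subset\subset V$ with $\partial U_i=S_i$ and $U_i\cap\partial V=\emptyset$. The volume bound $\mathrm{Vol}_{g_R}(U_i)\le C_1$ needs care without a global cylindrical structure. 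I would argue that $U_i$ cannot contain the whole stretched cylinder, since it is disjoint from $\partial V$ and bounded by a set of diameter at most $D$. So $U_i$ lies within a $D$-neighbourhood of $S_i$ together with a part of $V$ away from the collars, whose volume is independent of $R$.

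The step I expect to be the main obstacle is (d), connectivity of $M\setminus K_R$, together with the constancy argument. Here the first bullet, that $\partial V$ cuts $M$ into exactly three connected components, replaces the product structure. Those components are the two caps $M\setminus V$ and the interior of $V$. For $R$ large, $K_R$ is the union of the caps with $D$-collars inside the stretched cylinders $\Gamma^\pm\times[\cdot]$. The complement $M\setminus K_R$ is then the interior of $V$ minus two collars, each homeomorphic to $\Gamma^\pm\times[0,a)$. Removing a boundary collar does not change connectivity, so $M\setminus K_R$ is connected because $V$ is. With this, $F$ is either contained in $K_R$ or has complement contained in $K_R$. This gives the same uniform bound $\min\{\mathrm{Vol}(E),\mathrm{Vol}(M\setminus E)\}\le C_0+LC_1$, contradicting property 2, and so $\Sigma$ bounds an isoperimetric region in $(M,g_R)$ for large $R$.
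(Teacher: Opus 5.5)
Your proposal is correct and is the paper's argument. The paper's proof of this corollary just says to rerun the proof of Theorem \ref{thm: stretching} with the given $V$, and you carry out that check explicitly: the separation hypotheses give $\mathrm{Vol}_{g_R}(\Omega^\pm)\to\infty$, the three-component hypothesis gives connectivity of $M\setminus K_R$, and the homology hypothesis gives the dichotomy for components of $\partial E$ inside $V$. One small tidy-up: for the bound on $\mathrm{Vol}_{g_R}(U_i)$, say that $U_i$ lies in the unstretched part of $V$ together with a slab of height at most $D$ in one stretched cylinder, rather than in a $D$-neighbourhood of $S_i$ (points of $U_i$ need not be that close to $S_i$); this region still has volume independent of $R$.
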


\begin{proof}
    After choosing a neighbourhood, $V$, of $\Sigma$ satisfying the topological assumptions, one can proceed to argue identically as in the proof of Theorem \ref{thm: stretching} to conclude.
\end{proof}

\begin{remark}
    As utilised in Subsection \ref{subsec: singular sets} below, the examples of singular minimal hypersurfaces constructed in \cite{S00} have neighbourhoods satisfying the above topological assumptions of Corollary \ref{cor: singular stretching}.
\end{remark}

The method of proof of Theorem \ref{thm: stretching} also allows for the construction of isoperimetric regions from volume-constrained minimisers that are, a priori, close in area to a local area-minimiser:

\begin{corollary}\label{cor: stretching for vcm}  
    Suppose that $\Sigma,\widetilde{\Sigma} \subset M$ are smooth, closed, connected, separating hypersurfaces satisfying the following assumptions:
    
    \begin{itemize}
        \item $\Sigma$ is homologically area-minimising in a tubular neighbourhood, $V$, of $\Sigma$ in $(M,g)$.

        \item $\widetilde{\Sigma}$ bounds a unique volume-constrained minimiser in $V$ with respect to the metric $g$ (up to taking complements).

        \item $\widetilde{\Sigma}$ is both homologous to $\Sigma$ and sufficiently close in area to $\Sigma$ with respect to the metric $g$. 
    \end{itemize}
    Then, there is a Riemannian metric, $h$, on $M$ such that $\widetilde{\Sigma}$ bounds an isoperimetric region in $(M,h)$.
\end{corollary}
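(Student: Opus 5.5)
We run the stretching construction from the proof of Theorem \ref{thm: stretching} with $\widetilde{\Sigma}$ in place of $\Sigma$. Shrinking $V$ if needed, we take $\partial V = \Gamma^+ \cup \Gamma^-$ to be smooth level sets of the distance to $\Sigma$. These should be disjoint from $\widetilde{\Sigma}$, so we require $\widetilde{\Sigma} \subset V$; we read the second assumption as including this. Let $\widetilde{\Omega}$ be the region bounded by $\widetilde{\Sigma}$ that corresponds to $\Omega$. We stretch the collars near $\Gamma^\pm$ exactly as before, producing metrics $g_R$ that agree with $g$ near $\Sigma$, near $\widetilde{\Sigma}$, and on $M\setminus V$. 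Properties 1--3 of that proof then carry over with $\widetilde{\Omega}$ in place of $\Omega$:

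\begin{itemize}
\item[(i)] $\mathrm{Per}_{g_R}(\widetilde{\Omega}) = |\widetilde{\Sigma}|_g$.
\item[(ii)] $\mathrm{Vol}_{g_R}(\widetilde{\Omega})$ and $\mathrm{Vol}_{g_R}(M\setminus\widetilde{\Omega})$ tend to infinity as $R \to \infty$.
\item[(iii)] $\widetilde{\Omega}$ remains the unique volume-constrained minimiser in $(V,g_R)$.
\end{itemize}

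For (iii), note first that $\tilde g \ge g$ and $\rho_R \ge 1$, so areas in $V$ only increase, while the areas of $\Sigma$ and $\widetilde{\Sigma}$ are unchanged. The volume constraint is more delicate, since $g_R$ changes the volume of the collars. To handle it, we add a translation: a competitor with the same $g_R$-volume can be compared with a competitor with the same $g$-volume by sliding its portion inside the cylindrical region along the $t$-direction, which costs no area. If this fails, the fallback is to use uniqueness in the form stated in the corollary, namely applied to whichever metric $h$ we eventually choose.

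Next, take an isoperimetric region $E$ in $(M,g_R)$ with $\mathrm{Vol}_{g_R}(E) = \mathrm{Vol}_{g_R}(\widetilde{\Omega})$. The uniform bounds from \cite[Lemma 4.3]{N24} and the monotonicity formula still apply: each boundary component has area at least $\delta$, there are at most $L$ components, and each has extrinsic diameter at most $D$, all independent of $R$. Assume $E$ has no boundary component that lies in $V$ and is homologous to $\Sigma$. Then the argument with $K_R$, the sets $U_i$, the constancy theorem and property 2 gives $\min\{\mathrm{Vol}(E),\mathrm{Vol}(M\setminus E)\} \le C_0 + LC_1$, which is impossible for large $R$. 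Hence for $R$ large some component $S \subset V$ is homologous to $\Sigma$, and since $\Sigma$ is area-minimising in its homology class in $V$ we get $|S|_{g_R} \ge |\Sigma|_g$.

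This is where the third hypothesis is used, and it is the step I expect to be the main obstacle. We quantify "sufficiently close" by requiring $|\widetilde{\Sigma}|_g - |\Sigma|_g < \delta$. The difficulty is that $\delta$ depends on the bounds on mean curvature and geometry, which depend on $\widetilde{\Sigma}$ only through the enclosed volume and the fixed family $g_R$. We avoid circularity by bounding the mean curvature uniformly over all enclosed volumes in a fixed compact range, or else by fixing $V$ and the stretching first and then imposing the closeness assumption. With this in place, any further component of $\partial E$ would give
\[
\mathrm{Per}_{g_R}(E) \ge |\Sigma|_g + \delta > |\widetilde{\Sigma}|_g = \mathrm{Per}_{g_R}(\widetilde{\Omega}),
\]
contradicting isoperimetry. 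So $\partial E = S \subset V$. Since $S$ separates $V$ and is homologous to $\Sigma$, the set $E$ differs from $\widetilde{\Omega}$ only inside $V$, and $\mathrm{Vol}(E \cap V) = \mathrm{Vol}(\widetilde{\Omega}\cap V)$. Thus $E$ is a volume-constrained competitor in $V$ with perimeter no larger than that of $\widetilde{\Omega}$. To pass to a compactly supported competitor, we push $E$ slightly off $\partial V$ using the cylindrical collar. Uniqueness of the volume-constrained minimiser then gives $E = \widetilde{\Omega}$ up to complements. Therefore $\widetilde{\Sigma}$ bounds an isoperimetric region in $(M,g_R)$ for $R$ large, and we take $h = g_R$.
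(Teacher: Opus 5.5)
Apart from one step, your argument is the paper's proof. You stretch the collars as in Theorem~\ref{thm: stretching} and use the $R$-independent bounds and the volume contradiction to find a component $S\subset V$ of $\partial E$ homologous to $\Sigma$. You then combine $|S|_{g_R}\ge|S|_g\ge|\Sigma|_g$ with $|\widetilde\Sigma|_g<|\Sigma|_g+\delta$ to exclude further components, and conclude from uniqueness of the volume-constrained minimiser.

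\textbf{The gap is your justification of (iii)}, on which that last conclusion rests. Translation in $t$ along the cylindrical part is an isometry of $g_R$, so it preserves $g_R$-area and $g_R$-volume. But the obstruction is the discrepancy between $g_R$-volume and $g$-volume on the collars, coming from the factor $\sqrt{\rho_R}$ and the replacement of $h_t$ by $h_\Gamma$, and translation does not touch it. For instance, suppose $g=h+dt^2$ is already a product on the collar and $h_\Gamma=h$. Then translation preserves both volumes. Yet a set whose difference with $\widetilde\Omega$ consists of a piece in the stretched collar and a compensating piece near $\widetilde\Sigma$ has the same $g_R$-volume as $\widetilde\Omega$ but a different $g$-volume. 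The hypothesis, stated for $g$, says nothing about such competitors. Your fallback of reading the uniqueness hypothesis for the final metric is a change of hypothesis, not a proof of (iii).

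The paper's own proof takes this step without comment. It applies volume-constrained minimality of $\widetilde\Omega$ directly to $E$, which only shares its $g_R$-volume, so it effectively uses your fallback reading. To argue from the hypothesis as stated for $g$, you need $E\Delta\widetilde\Omega$ to stay where $g_R=g$. Here is one way, when $\Sigma$ is the \emph{unique} minimiser in $V$ (as in Theorem~\ref{thm: stretching} and in the remark following Theorem~\ref{thm: topological constructions}):
\begin{enumerate}
\item Fix a thin tubular neighbourhood $W$ of $\Sigma$, place the collars in $V\setminus\overline{W}$, and include $\widetilde\Sigma\subset W$ in `sufficiently close'.
\item The chain $|\Sigma|_g\le|S|_g\le|S|_{g_R}<|\widetilde\Sigma|_g$ shows that, as the area threshold shrinks, such $S$ converge to $\Sigma$ as $g$-varifolds, with $|S|_{g_R}-|S|_g\to0$.
\item The $R$-independent monotonicity bounds upgrade this to Hausdorff convergence. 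So for a threshold depending only on $V$, $W$, the collars and the uniform constants, you get $S\subset W$.
\item Hence $E\Delta\widetilde\Omega\subset W$ (up to complements), so $E$ is a genuine $g$-competitor in $V$, and the $g$-uniqueness gives $\partial E=\widetilde\Sigma$.
\end{enumerate}
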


\begin{proof}
    Denoting $\widetilde{\Omega} \subset M$ an open set bounded by $\widetilde{\Sigma}$, one can follow the construction in the proof of Theorem \ref{thm: stretching} verbatim to ensure that, for $R > 1$ sufficiently large, any isoperimetric region, $E \subset M$, of enclosed volume $\mathrm{Vol}_{g_R}(\widetilde{\Omega})$ in $(M,g_R)$ has at least one connected boundary component, say $S \subset \partial E$, homologous to $\Sigma$ in $V$.
    
    \bigskip
    
    Under the assumption that $\widetilde{\Sigma}$ is sufficiently close in area to $\Sigma$, namely provided
    $\mathrm{Per}_{g_R}(\widetilde{\Omega}) < \mathrm{Per}_{g_R}(\Omega) + \delta$ (where $\partial \Omega = \Sigma$ and $\delta > 0$ is the uniform lower area bound as in the proof of Theorem \ref{thm: stretching}), we now deduce that $E$ must have exactly one boundary component (i.e.~that $\partial E = S$). To see this, since $\Sigma$ is locally homologically area-minimising in $V$ we have $|S|_{g_R} \geq \mathrm{Per}_{g_R}(\Omega)$, and as $E$ is isoperimetric we have 
    $$\mathrm{Per}_{g_R}(\Omega) \leq |S|_{g_R} \leq \mathrm{Per}_{g_R}(E) \leq \mathrm{Per}_{g_R}(\widetilde{\Omega}) < \mathrm{Per}_{g_R}(\Omega) + \delta;$$
    hence $E$ has exactly one boundary component as any additional boundary components would contribute at least $\delta$ perimeter, forcing $\mathrm{Per}_{g_R}(E) \geq \mathrm{Per}_{g_R}(\Omega) + \delta$, a contradiction. The volume-constrained minimisation of $\widetilde{\Omega}$ in $V$ then ensures that $\mathrm{Per}_{g_R}(\widetilde{\Omega}) = \mathrm{Per}_{g_R}(E)$, and the uniqueness implies that $\partial E = \widetilde{\Sigma}$ as desired.
\end{proof}

\begin{remark}
    With the same assumptions and reasoning as for Corollary \ref{cor: singular stretching}, Corollary \ref{cor: stretching for vcm} can, in principle, be applied to the boundaries of volume-constrained minimisers with non-empty singular sets.
\end{remark}

\section{Applications}\label{sec: applications}

We record some direct applications of Theorem \ref{thm: stretching}, showing that the space of isoperimetric boundaries in $M$ is rich both in terms of topological complexity and singular behaviour.

\subsection{Topological types}\label{subsec: topology}

We show that the space of isoperimetric boundaries is topologically diverse:

\begin{theorem}\label{thm: topological constructions}
    Given a smooth, closed, connected, separating hypersurface, $\Sigma \subset M$, there is a Riemannian metric, $g$, on $M$ such that $\Sigma$ bounds an isoperimetric region in $(M,g)$.
\end{theorem}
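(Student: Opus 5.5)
The plan is to reduce to Theorem \ref{thm: stretching}. It suffices to construct a metric $g_0$ on $M$ in which $\Sigma$ is uniquely homologically area-minimising in some tubular neighbourhood $V$. Applying Theorem \ref{thm: stretching} to $(M,g_0)$ then gives a metric $h$ for which $\Sigma$ bounds an isoperimetric region, and we take $g=h$.

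First, $\Sigma$ is separating, so it is two-sided. Hence it has a tubular neighbourhood $V$ diffeomorphic to $\Sigma\times[-1,1]$ via a map $\Psi$ with $\Psi(\Sigma\times\{0\})=\Sigma$. Fix any metric $\sigma$ on $\Sigma$. On $V$ we define the warped product metric
$$\Psi^* g_0 = \phi(t)^2\sigma + dt^2,$$
where $\phi:[-1,1]\to(0,\infty)$ is smooth and has a strict global minimum at $t=0$, for instance $\phi(t)=1+t^2$. We extend $g_0$ to all of $M$ arbitrarily, using a partition of unity. Only the metric on $V$ matters for local minimisation, so we may also adjust the interpolation near $\partial V$ so that it occurs outside a slightly smaller product region. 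The smaller region is still a tubular neighbourhood in which the argument below applies.

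The key step is the calibration argument showing unique homological minimisation in $V$. Let $\pi:V\to\Sigma$ be the projection $\Psi(x,t)\mapsto x$. Consider the $n$-form $\omega=\pi^*dV_\sigma$; in the non-orientable case, the density version works for $\mathbb{Z}_2$ chains. On each slice $\Sigma_t$ it restricts to $\phi(t)^{-n}\,dV_{\Sigma_t}$. For any competitor $S\subset V$ homologous to $\Sigma$, the pushforward satisfies $\pi_\#S=\Sigma$ as a current or mod 2 chain. The area formula then gives
$$|S|_{g_0}\ \ge\ \int_S \phi(t)^n\, |\pi^*dV_\sigma| \ \ge\ \int_S |J\pi| \ \ge\ |\Sigma|_\sigma = |\Sigma|_{g_0}.$$
The first inequality holds because $\pi$ is $\phi(t)^{-1}$-Lipschitz on horizontal directions and kills $\partial_t$. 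The second uses $\phi\ge1$. The third holds because $\pi_\#S=\Sigma$ with degree one.

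Equality forces $\phi(t)=1$ $\mathcal{H}^n$-almost everywhere on $S$, which means $S\subset\Sigma_0=\Sigma$. Then $S=\Sigma$ by the constancy theorem. Alternatively, one can use the $\mathbb{Z}_2$ version of the coarea/projection argument: $S$ must meet almost every fibre $\{x\}\times[-1,1]$ an odd number of times. This gives $|S|\ge\int_\Sigma \min_{\text{fibre}}\phi^n\,dV_\sigma$, with equality only when $S=\Sigma$. I would present this fibre-counting version, since it handles both coefficient rings uniformly and avoids orientation issues.

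The main obstacle I expect is making the comparison rigorous for general rectifiable competitors or Caccioppoli boundaries rather than smooth surfaces. This includes justifying that $\pi_\#S$ has multiplicity one, or odd multiplicity, on $\Sigma$, and extracting the equality case. For this I would use the slicing/coarea formula for $\pi$ restricted to $S$, together with the homology assumption. The homology assumption implies that almost every fibre, an arc from $\Gamma^-$ to $\Gamma^+$, intersects $S$ with odd (or $\pm1$) algebraic multiplicity, because the fibre's endpoints lie on opposite sides of $S$. The remaining steps are standard.
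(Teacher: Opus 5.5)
Your proposal is correct, but it reaches the hypothesis of Theorem \ref{thm: stretching} by a different route than the paper.

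The paper keeps an arbitrary starting metric and makes two conformal changes. The factor $e^{2f_1}$ with $\partial_\nu f_1 = H_g$ makes $\Sigma$ minimal. The factor $e^{2f_2}$, with $f_2$ vanishing to first order on $\Sigma$ and having large normal Hessian, forces $\mathrm{Ric}(\nu,\nu) < -|\sff|^2$, so $\Sigma$ becomes strictly stable. Unique homological minimisation in a small tubular neighbourhood is then imported from \cite{W94}.

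You instead replace the metric near $\Sigma$ by the warped product $\phi(t)^2\sigma + dt^2$ and prove unique minimisation directly. The projection satisfies $J^S\pi \le \phi(t)^{-n}$, homology gives $\pi_\# S = \Sigma$, and the equality case together with the constancy theorem yields $S=\Sigma$.

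The step you flag as the main obstacle is in fact routine. If $S-\Sigma=\partial Q$ in $V$, then $\pi_\# Q$ is an $(n+1)$-dimensional current (or mod $2$ chain) on the $n$-manifold $\Sigma$, hence zero. So $\pi_\# S=\Sigma$, and $\mathbf{M}(\pi_\# S)\le\int_S J^S\pi\,d\|S\|$ is the standard pushforward mass bound. For the equality case, note that equality in your first inequality forces $J^S\pi=\phi^{-n}>0$ almost everywhere. Combined with equality in the second inequality, this gives $\phi=1$ $\|S\|$-almost everywhere. The fibre-counting reformulation is therefore unnecessary.

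What your route buys:
\begin{itemize}
\item It is self-contained and elementary: a calibration-type inequality replaces the deeper result of \cite{W94}.
\item The region of unique minimisation is the whole warped region, not an unspecified small neighbourhood.
\item Both coefficient rings are handled at once.
\item Nothing used later is lost. Since $\phi'(0)=0$ and $\mathrm{Ric}(\partial_t,\partial_t)=-n\phi''(0)/\phi(0)=-2n$, your $\Sigma$ is totally geodesic and strictly stable, so the subsequent remark on CMC graphs still applies.
\end{itemize}
The paper's route, by contrast, only conformally perturbs a given metric. It isolates the general principle that strict stability implies local unique minimisation, which is the same mechanism it invokes again for volume-constrained minimisers.
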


\begin{proof}
    Starting with an arbitrary Riemannian metric, $g$, the idea is to find a suitable metric perturbation which ensures $\Sigma$ is uniquely homologically area-minimising in a tubular neighbourhood, then directly apply Theorem \ref{thm: stretching} to obtain the desired conclusion.
    
    \bigskip

    For a smooth function, $f$, on $M,$ if we define $h = e^{2f}g$ then by \cite[1.159/(1.163)]{B87} the mean curvature, $H_{h}$, of $\Sigma$ with respect to $h$ satisfies
    $$H_{h} = e^{-f}\left(H_g - \frac{\partial f}{\partial \nu}\right),$$
    and the Ricci curvature, $\mathrm{Ric}_{h}$, of $M$ with respect to $h$ satisfies
    $$\mathrm{Ric}_{h} = \mathrm{Ric}_g - (n-1)(\nabla^2_gf - df \otimes df) - (\Delta_g f + (n-1)|\nabla_gf|^2)g.$$ 
    Choosing a smooth function, $f_1$, on $M$ with $\frac{\partial f_1}{\partial \nu} = H_g$, we ensure that $\Sigma$ is a minimal hypersurface in $(M,h)$. We then choose another smooth function, $f_2$, on $M$ so that both $f_2$ and $\nabla^{h}f_2$ vanish on $\Sigma$, ensuring that $\nu_{h}$ is also a unit normal with respect to the metric $\tilde{h} = e^{2f_2}h$, $\sff_{\Sigma,h} = \sff_{\Sigma,\tilde{h}}$ (equality of the second fundamental forms of $\Sigma$ with respect to each metric), and $\Delta_{h}f_2 = \nabla^2_{h}f_2(\nu,\nu)$ on $\Sigma$. We thus compute that
    $$\mathrm{Ric}_{\tilde{h}}(\nu_h,\nu_h) = \mathrm{Ric}_h(\nu_h,\nu_h) - n\nabla^2_hf_2(\nu_h,\nu_h),$$
    and so choosing $\nabla^2_hf_2(\nu_h,\nu_h)$ sufficiently large (in particular so that $\mathrm{Ric}_{\tilde{h}}(\nu_h,\nu_h) < - |\sff_{\Sigma,h}|^2$ on $\Sigma$), we ensure that $\Sigma$ is a strictly stable minimal hypersurface in $(M,\tilde{h})$ in the sense that for each non-zero $\phi \in C^{\infty}_c(\Sigma)$ we have
    $$\int_{\Sigma} |\nabla^{\tilde{h}} \phi|^2 - (|\sff_{\Sigma,\tilde{h}}|^2 +\mathrm{Ric}_{\tilde{h}}(\nu_h,\nu_h))\phi^2 \, dV_{\tilde{h}} > 0.$$
    Concretely, one can choose $f_2$ so that it agrees with a large multiple of the square of the distance function to $\Sigma$ in a tubular neighbourhood.
    By the results of \cite{W94}, the strict stability of $\Sigma$ ensures its unique homological area-minimisation in a tubular neighbourhood. Applying Theorem \ref{thm: stretching} in this tubular neighbourhood, we find a Riemannian metric on $M$ for which $\Sigma$ bounds an isoperimetric region as desired.
\end{proof}

\begin{remark}
    While above we prescribe the ambient closed manifold and ensure isoperimetry for any smooth, closed, connected, separating hypersurface, the same technique shows that one can instead prescribe any null-cobordant, smooth, closed, connected manifold, $\Sigma$, and construct an ambient closed manifold (formed by doubling the manifold bounded by $\Sigma$) along with a Riemannian metric on it so that $\Sigma$ bounds an isoperimetric region.
\end{remark}

\begin{remark}
    From the proof of Theorem \ref{thm: topological constructions}, the resulting isoperimetric boundaries, $\Sigma$, are strictly stable and locally uniquely homologically area-minimising. One can instead produce isoperimetric boundaries with small non-zero constant mean curvature by applying Corollary \ref{cor: stretching for vcm}. More precisely, the strict stability of $\Sigma$ ensures the invertibility of its Jacobi operator and hence, for small constant mean curvature values, one can find strictly stable CMC graphs, $\widetilde{\Sigma}$, over $\Sigma$. The results of \cite{W94} (see also \cite{G96}) ensure that such $\widetilde{\Sigma}$ are locally uniquely volume-constrained minimising and hence, as the graphs will have area close to that $\Sigma$ for small values of the mean curvature, one can apply Corollary \ref{cor: stretching for vcm} to ensure that $\widetilde{\Sigma}$ bounds an isoperimetric region for some Riemannian metric on $M$.
\end{remark}

\subsection{Singular sets}\label{subsec: singular sets}

Our initial motivation for this work stemmed from a claim made in \cite{M16}, concerning the possibility of constructing singular isoperimetric regions in Riemannian spheres, which we now describe. If we consider a strictly stable, strictly minimising, minimal hypercone, $\mathbf{C} \subset \mathbb{R}^{n+1}$, with $\mathrm{Sing}(\mathbf{C}) = \{0\}$ (thus necessarily $n + 1\geq 8$), we see that  $\Sigma = (\mathbf{C} \times \mathbb{R}) \cap \mathbb{S}^{n+1}$ is a minimal hypersurface in $(\mathbb{S}^{n+1},g_{\mathrm{round}})$ with two isolated singularities (antipodal to one another), where $g_\mathrm{round}$ is the metric on $\mathbb{S}^{n+1}$ induced from $g_\mathrm{eucl}$. The results of \cite[Section 3]{S99} show that, after successive conformal metric perturbations, $\Sigma$ is in fact uniquely homologically area-minimising in a neighbourhood with respect to some Riemannian metric on $\mathbb{S}^{n+1}$. It was then claimed in \cite[Section 13.2]{M16} that ``if you make
the metric huge on the rest of the sphere, it should be globally area-minimising for its volume'' which we interpreted to mean that $\Sigma$ can be shown to bound an isoperimetric region for some choice of Riemannian metric on $\mathbb{S}^{n+1}$. The discussion on \cite[Pages 168-169]{S99} shows that the hypotheses of our Corollary \ref{cor: singular stretching} are satisfied, and hence its application directly verifies the above claim.

\bigskip

In fact, we can use our Corollary \ref{cor: singular stretching} to produce isoperimetric boundaries with the singular sets exhibited in \cite{S00}. There, a variety of singular minimal hypersurfaces were constructed, and then shown to be locally area-minimising, in closed Riemannian manifolds. We first introduce the class of manifolds from which the singular sets will be formed:

\begin{definition}
    For $n\ge 7$, we denote by $\mathcal{F}(n+1)$ the collection of smooth, closed, connected, orientable manifolds, $\Gamma$, of dimension at most $n - 7$ such that there exists a smooth embedding $\Phi: \overline{B}^{m+1}_1(0) \times \Gamma \to \mathbb{R}^{n+1}$, where $m + \mathrm{dim}( \Gamma) + 1 = n + 1$ and $\overline{B}^{m+1}_1(0)$ denotes the closed unit ball in $\mathbb{R}^{m+1}$.
\end{definition}
\begin{remark}
    As noted in \cite{S00}, there are many examples of manifolds in the class $\mathcal{F}(n+1)$. For instance, if $\Gamma $ is embedded into $\mathbb{R}^{n+1}$ with trivial normal bundle, then $\Gamma \in \mathcal{F}(n+1)$. In particular, if $\Gamma$ is embedded in $\mathbb{R}^{\mathrm{dim} (\Gamma) +1}$ with $\mathrm{dim}(\Gamma)\le n-7$, then $\Gamma $ is also in $\mathcal{F}(n+1)$.
\end{remark}

With the above notation, we construct singular isoperimetric boundaries as follows:

\begin{theorem}\label{thm: singular constructions}
    Suppose that $M$ is an orientable, closed manifold of dimension $n+1\ge 8$ and $L \geq 1$, then for any collection of Riemannian manifolds, $\{(\Gamma_i, h_i)\}_{i=1}^L$,
    with $\Gamma_i\in \mathcal{F}(n+1)$, and any collection, $\{\mathbf{C}_i\}_{i = 1}^L$, of strictly stable, strictly minimising, regular minimal hypercones, $\mathbf{C}_i\subset \mathbb{R}^{n_i+1}$, where we denote $n_i=n-\mathrm{dim} (\Gamma_i)$, there is a Riemannian metric, $g$, on $M$, and an isoperimetric region, $\Omega$, in $(M,g)$ with boundary $\Sigma$ such that
    \[
    \mathrm{Sing}(\Sigma)=\bigcup_{i=1}^L \Lambda_i,
    \]
    where $\Lambda_i$ is an embedded copy of $\Gamma_i$ for $i=1,\dots,L$. In particular, in a neighbourhood of $\Lambda_i$ for each $i=1,\dots,L$, $(M,g)$ is locally isometric to $B^{n_i+1}_\sigma(0)\times \Gamma_i$
    endowed with the product metric $g_{\mathrm{eucl}}\times h_i$, 
    with $\Sigma$ in this region isometric to $(\mathbf{C}_i \cap B^{n_i+1}_\sigma(0))\times \Gamma_i$, for some $\sigma > 0$ sufficiently small.
\end{theorem}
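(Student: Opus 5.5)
The plan is to invoke the construction of \cite{S00} to obtain a singular, locally uniquely homologically area-minimising, separating hypersurface in $M$ with the prescribed singular set and local model. We then apply Corollary \ref{cor: singular stretching}, modified so that the stretching does not disturb the product structure near the singular set.

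The first step is to produce the hypersurface. Since each $\Gamma_i \in \mathcal{F}(n+1)$, we fix a coordinate ball in $M$ (using orientability only so that the resulting hypersurface is two-sided and separating with $\mathbb{Z}$ coefficients) and embed disjoint copies $\Phi_i(\overline{B}^{n_i+1}_1(0)\times\Gamma_i)$ in it. Inside each embedded tube we place $(\mathbf{C}_i\cap B^{n_i+1}_1(0))\times\Gamma_i$. Following \cite{S00}, we then join these pieces by smooth hypersurfaces outside the tubes to obtain a closed, connected hypersurface $\Sigma$ bounding a region in the ball; connectedness comes from connecting the pieces by tubes. Next we choose a metric that equals $g_{\mathrm{eucl}}\times h_i$ on $B^{n_i+1}_\sigma(0)\times\Gamma_i$, so that $\Sigma$ is minimal there. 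Using the perturbation scheme of \cite{S00} (conformal changes supported away from the singular set, as in the proof of Theorem \ref{thm: topological constructions} on $\mathrm{Reg}(\Sigma)$), we make $\Sigma$ minimal and strictly stable away from the singular set. The strict stability and strict minimality of $\mathbf{C}_i$ are then used, via \cite{HS85} and the analysis of \cite{S00}, to conclude that $\Sigma$ is uniquely homologically area-minimising in some neighbourhood $V$. By construction, $\mathrm{Sing}(\Sigma)=\bigcup_i\Lambda_i$ with $\Lambda_i=\Phi_i(\{0\}\times\Gamma_i)$.

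The second step is to verify the topological hypotheses of Corollary \ref{cor: singular stretching}. We take $V$ to be the region between two smooth leaves $\Gamma^\pm$ of the foliation by minimal or mean-convex hypersurfaces near $\Sigma$ furnished in \cite{S00}; in the cone regions these are Hardt--Simon leaves crossed with $\Gamma_i$. Then $V$ is a cobordism between $\Gamma^\pm$, $\Sigma$ separates $V$, and $\partial V$ cuts $M$ into three components. To see that $\Sigma$ generates $H_n(V)$, we deformation retract $V$ onto $\Sigma$ along the foliation.

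The final step is the stretching. The stretching in Theorem \ref{thm: stretching} only modifies the metric in collars of $\Gamma^\pm$, which are disjoint from $\Sigma$. Choosing $V$ thin enough away from the singular set, but with $\Gamma^\pm$ passing at positive distance from $\Lambda_i$, the product neighbourhoods $B_\sigma^{n_i+1}(0)\times\Gamma_i$ remain untouched for small $\sigma$. This gives the stated local isometry and leaves the singular set unchanged.

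I expect the main obstacle to be the first step, namely obtaining unique homological minimisation in a neighbourhood with the required topology in the presence of the higher-dimensional singular sets $\Lambda_i$. The plan is to quote \cite{S00} directly for this, as the remark after Corollary \ref{cor: singular stretching} indicates, rather than reprove it.
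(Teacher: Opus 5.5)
Your proposal is correct and follows the paper's route: build Smale's singular hypersurface from capped product cones placed via $\Gamma_i \in \mathcal{F}(n+1)$, invoke \cite{S00} to get unique local homological minimisation with the metric left as the product metric near $\mathrm{Sing}(\Sigma)$, use Smale's neighbourhood structure for the topological hypotheses, and apply Corollary \ref{cor: singular stretching}, whose stretching only touches collars of $\partial V$ away from $\Sigma$. The differences are presentational. You join the capped pieces to each other inside a coordinate ball, whereas the paper attaches them by handles to an auxiliary separating hypersurface $S$. You also sketch the topological hypotheses through the foliation leaves and spell out why the product structure near $\Lambda_i$ survives the stretching, where the paper simply cites \cite{S00} and \cite{S99} and leaves the latter implicit.
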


\begin{proof}
First, we pick a smooth, closed, connected, separating hypersurface, $S\subset M$; for example, one can choose $S$ to be the boundary of a small geodesic ball for some Riemannian metric on $M$. Because $M$ is orientable, $S$ is also orientable. We now describe the procedure used in the proof of \cite[Theorem 1]{S00} in order to construct singular minimal hypersurfaces from $S$. Consider a tubular neighbourhood, $\mathcal{E}$, which $S$ separates into disjoint connected components, $\mathcal{E}^\pm$, then choose points, $p_1,\dots,p_L\in S$, and pairwise disjoint balls, $B_i$, each diffeomorphic to a Euclidean ball of dimension $n+1$ centred at each point, $p_i$, so that $B_i \subset \subset \mathcal{E}$. Then, for each $i = 1, \dots, L$, the ball $B_i$ is separated by $S$ into two disjoint connected components, $B_i^\pm$.

\bigskip
For each $i=1,\dots,L$, let $ \mathbf{\widehat C}_i\subset B^{n_i+1}_1(0)$ be a capped-off version of the minimising stable hypercone $\mathbf{C}_i$ as constructed in \cite[Section 2, Proposition]{S00}; more precisely, $\mathbf{\widehat C}_i\subset B^{n_i+1}_1(0)$ is a closed hypersurface, smoothly embedded away from the origin, and such that $\mathbf{\widehat C}_i \cap B^{n_i+1}_\sigma(0) = \mathbf{C}_i \cap B^{n_i+1}_\sigma(0)$ for some $\sigma > 0$. By the hypothesis on $\Gamma_i$ and since each $B_i$ is diffeomorphic to a Euclidean ball of dimension $n+1$, for each $i = 1,\dots,L$ we choose an embedding
\[
\Phi_i: B^{n_i+1}_1(0)\times \Gamma_i \hookrightarrow B_i^+,
\]
and define
\[
\Sigma_i = \Phi_i(\mathbf{\widehat  C}_i\times \Gamma_i)\subset B_i^+ .
\]
Next, for each $i = 1,\dots,L$ we choose two $n$-disks, $D_i \subset \Sigma_i,D_i'\subset S\cap B_i$, which by removing $D_i$ and $D_i'$, allow us to connect $S$ and $\Sigma_i$ inside $B_i^+$ by an $n$-dimensional handle (a hypersurface diffeomorphic to $S^{n-1}\times I$) whose boundary components are glued to $\partial D_i$ and $\partial D_i'$.
We then finally let $\Sigma$ be the resulting smooth, closed, connected hypersurface
\[
\Sigma \;=\; S \# \Sigma_1 \# \cdots \# \Sigma_L,
\]
where $\#$ denotes the connected sum, which can be done in such a way to ensure that $\Sigma$ is embedded away from $\mathrm{Sing}(\Sigma)$, is orientable, and separates $M$. For each $i= 1,\dots, L$, on the images, $\Phi_i(B^{n_i+1}_1(0)\times \Gamma_i)$, we define the pull-back metric
\[
g_i = (\Phi_i^{-1})^*(g_{eucl}\oplus h_i).
\]
Then, by using a partition of unity, we can choose a Riemannian metric, $g_0$, on $M$ such that it agrees with $g_i$ on $\Phi_i\bigl(B^{n_i+1}_{1/2}(0) \times \Gamma_i\bigr)$ for each $i=1,\dots,L$.
The hypersurface $\Sigma \subset (M,g_0)$ then satisfies each of the assumptions of \cite[Lemma 1]{S00}, and hence we can apply it to find a Riemannian metric, $h$, on $M$
that agrees with $g_0$ in a neighbourhood of $\mathrm{Sing}(\Sigma)$ and ensures that $\Sigma$ is uniquely homologically minimising in a tubular neighbourhood. 

\bigskip

As noted on \cite[Pages 2323-2324]{S00} (see also the discussion on \cite[Pages 168-169]{S99}), we can find a tubular neighbourhood of $\Sigma$ in which the assumptions of Corollary \ref{cor: singular stretching} are satisfied by virtue of the above construction. We can thus apply Corollary \ref{cor: singular stretching} in order to conclude that $\Sigma$ bounds an isoperimetric region for some Riemannian metric on $M$ as desired.
\end{proof}

\begin{remark}\label{rem: cones}
    As mentioned in Section \ref{sec: introduction}, due to a topological restriction, the singular examples of isoperimetric boundaries constructed in \cite{N24} required the prescribed regular minimal hypercones, $\mathbf{C}_i$, on which the singularities were modelled to be symmetric Simons cones, namely those with link given by the products of spheres, $\mathbb{S}^p(r)\times \mathbb{S}^p(r)\subset \mathbb{S}^{2p+1}$ for $p\ge3$ and some $r>0$. Our construction does not need this constraint, and ensures the singularities have unique cylindrical tangent cones, of the form $\mathbf{C}\times \mathbb{R}^k$, for any choice, $\mathbf{C} \subset \mathbb{R}^{n - k + 1}$, of strictly stable, strictly minimising regular minimal hypercone; this directly addresses a question left open by the constructions in \cite{N24}.
\end{remark}

\begin{remark}
    The results of \cite{MNP25} show that isoperimetric boundaries are generically (in various settings) smooth and nondegenerate in closed manifolds of dimension eight. In particular, \cite[Theorem 1]{MNP25} ensures that the above examples of Riemannian metric and enclosed volume pairs on a given closed eight-manifold which admit singular isoperimetric regions (having only finitely many isolated singularities in this case) are necessarily contained in a Baire meagre set of such pairs. Moreover, the existence of singular isoperimetric boundaries in any closed manifold of dimension eight, as guaranteed by Theorem \ref{thm: singular constructions}, retroactively shows that the results of \cite{MNP25} are sharp.
\end{remark}

\bibliographystyle{alpha}
\bibliography{main}

\hrule
\Addresses
\end{document}